\documentclass[10pt,a4paper,reqno]{amsart}
\usepackage[T1]{fontenc}
\usepackage[utf8]{inputenc}
\usepackage[american]{babel}

\usepackage{amsmath,amssymb,amsthm}
\usepackage{mathtools}
\usepackage[scr=boondox]{mathalfa}
\usepackage{cases}
\usepackage{amsfonts}
\usepackage{mathrsfs}

\usepackage{graphicx}
\usepackage[font=small]{caption}
\usepackage{subcaption}
\usepackage{multirow}
\usepackage[shortlabels]{enumitem}
\usepackage{cases}

\usepackage{xcolor}
\usepackage[colorlinks=true,linkcolor=blue]{hyperref}
\makeatletter 
\@namedef{subjclassname@2020}{\textup{2020} Mathematics Subject Classification}
\makeatother

\theoremstyle{plain}
\newtheorem{thm}{Theorem}
\newtheorem{prop}[thm]{Proposition}
\newtheorem{lem}[thm]{Lemma}

\theoremstyle{definition}
\newtheorem*{thm*}{Theorem}

\theoremstyle{remark}
\newtheorem{rem}[thm]{Remark}
\newtheorem*{rem*}{Remark}

\theoremstyle{plain}

\numberwithin{equation}{section}

\newtheorem*{question*}{\itshape QUESTION}

\theoremstyle{plain}

\newcommand{\mb}{\mathbb}
\newcommand{\mc}{\mathcal}

\newcommand{\mf}{\mathfrak}

\newcommand{\N}{\mb{N}}
\newcommand{\R}{\mb{R}}
\newcommand{\C}{\mb{C}}

\newcommand{\g}{{\mf{g}}}

\newcommand{\lsum}[1]{\sum_{n=#1}^{\infty}}

\newcommand{\abs}[1]{\left|#1\right|}
\newcommand{\bigO}[1]{\mc{O}\left(#1\right)}
\newcommand{\paren}[1]{\left( #1 \right)}

\newcommand{\dd}{\,\mathrm{d}}
\newcommand{\ddiv}{\mathrm{div}}
\newcommand{\shift}{\mathrm{S}}

\newcommand{\grad}{\nabla}
\newcommand{\lap}{\Delta}

\begin{document}

\title[]{The $p$-Hardy--Rellich--Birman inequalities on the half-line}

\author{Franti\v sek \v Stampach}
\address[Franti{\v s}ek {\v S}tampach]{
  Department of Mathematics, Faculty of Nuclear Sciences and Physical Engineering, Czech Technical University in Prague, Trojanova~13, 12000 Praha~2, Czech Republic
}
\email{stampfra@cvut.cz}

\author{Jakub Waclawek}
\address[Jakub Waclawek]{
  Department of Mathematics, Faculty of Nuclear Sciences and Physical Engineering, Czech Technical University in Prague, Trojanova~13, 12000 Praha~2, Czech Republic
}
\email{waclajak@cvut.cz}

\subjclass[2020]{26D15, 39A12, 47J05}

\keywords{Hardy inequality, Rellich inequality, Birman inequality, Copson inequality}

\date{\today}

\begin{abstract}
The classical discrete $p$-Hardy inequality establishes a sharp relationship between the $\ell^{p}$-norms of a sequence and its discrete derivative. In this paper, we generalize this inequality to discrete derivatives of arbitrary integer order $\ell \geq 1$, yielding discrete $p$-Rellich ($\ell=2$) and general $p$-Birman ($\ell \geq 3$) inequalities. As a key step in the proof, we deduce a variant of the Copson inequality with a negative exponent, which may be of independent interest. Furthermore, we demonstrate how the continuous $p$-Birman inequality can be recovered from our discrete version, providing an alternative proof of this classical result. All constants in the obtained inequalities are shown to be optimal.
\end{abstract}

\maketitle

\section{Introduction and main results}

\subsection{Introduction}
More than a hundred years ago, Hardy discovered his celebrated inequality, which, in the difference form, reads
\begin{equation}
  \label{eq:p-Hardy_dis}
  \sum_{n=1}^\infty \abs{u_n-u_{n-1}}^p
    \ge \paren{\frac{p-1}{p}}^{\!p} \lsum{1} \frac{\abs{u_n}^p}{n^p}
\end{equation}
and holds for all compactly supported sequences $u\in C_0(\N_0)$ satisfying $u_0=0$ and $p>1$. In fact, Hardy had already obtained an analogue of \eqref{eq:p-Hardy_dis} with a smaller constant as early as 1915. In the years that followed, several other prominent mathematicians made substantial contributions to this topic. In particular, Landau proved \eqref{eq:p-Hardy_dis} with its sharp constant. We refer to \cite{kufner-malingrada-oersson_06-prehistory, kufner-maligranda-persson_07-hardy_ineq} for a historical overview and proofs.  The continuous analogue of \eqref{eq:p-Hardy_dis}, discovered shortly afterward, reads
\begin{equation}
 \label{eq:p-Hardy_cont}
  \int_0^\infty \abs{\varphi'(x)}^p\dd x
    \ge \paren{\frac{p-1}{p}}^{\!p} \int_0^\infty \frac{\abs{\varphi(x)}^p}{x^p}\dd x
\end{equation}
for smooth functions $\varphi\in C^\infty_0(\R_{+})$ compactly supported in $\R_{+}\equiv(0,\infty)$.

Given the wide range of applications of the Hardy inequality in probability theory, the theory of partial differential equations, and numerous other domains, its various generalizations have been the subject of extensive investigation. Most of the research is focused on the particular case $p=2$ when inequalities \eqref{eq:p-Hardy_dis} and \eqref{eq:p-Hardy_cont} represent lower bounds on the discrete and continuous Dirichlet Laplacian on the half-line in the sense of quadratic forms. In the context of the present work, we focus on higher-order variants of inequalities \eqref{eq:p-Hardy_dis} and \eqref{eq:p-Hardy_cont}, which we refer to as the \textit{Birman inequalities}. In 1961, Birman~\cite{birman_61} generalized inequality~\eqref{eq:p-Hardy_cont}, with $p=2$, to higher order derivatives: 
\begin{equation}
  \label{eq:Birman_con}
  \int_0^\infty \abs{\varphi^{(\ell)}(x)}^2\dd x
    \ge \paren{\frac{1}{2}}_{\!\ell}^{\!2} \int_0^\infty \frac{\abs{\varphi(x)}^2}{x^{2\ell}}\dd x,
\end{equation}
where $\varphi\in C^\infty_0(\R_{+})$, $\ell\in\N$, and $(a)_\ell \coloneq a(a+1)\dots(a+\ell-1)$ is the Pochhammer symbol. A proof of~\eqref{eq:Birman_con} can be found in \cite[pp.~83--84]{glazman}; see also the recent paper \cite{ges-lit-mic-wel_18} for other proofs and generalizations. The particular case $\ell=2$ of \eqref{eq:Birman_con} was discovered earlier by Rellich~\cite{rellich_56}, even in higher dimensions, and is commonly referred to as the \textit{Rellich inequality}.

Surprisingly, the discrete version of \eqref{eq:Birman_con} was proven only recently in~\cite{huang-ye_24-hardy}; the discrete Rellich inequality appears also in~\cite{ger-kre-sta_25}. In the discrete setting, the higher-order derivative on the left-hand side of~\eqref{eq:Birman_con} is to be replaced by an integer power of the discrete gradient; see Section~\ref{subsec:notation} for the notation. Then, for any $\ell\in\N$, the discrete Birman inequality can be expressed as
\begin{equation}
  \label{eq:discrete_Birman_ineq}
  \lsum{\ell} \abs{\grad^\ell u_n}^2
    \ge \paren{\frac{1}{2}}_{\!\ell}^{\!2} \lsum{\ell} \frac{\abs{u_n}^2}{n^{2\ell}},
\end{equation}
where $u\in C_{0}(\N_{0})$ with $u_n=0$ for all $n<\ell$.

For $p\neq2$, the literature is less complete. The continuous $p$-Birman inequality
\begin{equation}
\label{eq:p-Birman_con}
 \int_0^\infty \abs{\varphi^{(\ell)}(x)}^{p}\dd x \ge \paren{\frac{p-1}{p}}_{\!\ell}^{\! p} \int_0^\infty \frac{\abs{\varphi(x)}^p}{x^{\ell p}}\dd x
\end{equation}
holds for $p>1$, $\ell\in\N$, and $\varphi\in C^\infty_0(\R_{+})$. 
Although we could not locate an explicit reference, with the exception of the $p$-Rellich inequality ($\ell=2$) which appears in \cite[Eq.~(1.2)]{owen_99-hardy-rellich}, inequality \eqref{eq:p-Birman_con} is well known to experts and is fundamentally rooted already in the original work of Hardy; see Remark \ref{rem:fritz}. In contrast, its discrete counterpart appears to remain unknown for general $p>1$. The primary goal of this paper is to establish the discrete $p$-Birman inequality with its optimal constant, thereby completing the picture of $p$-Birman inequalities on the half-line for general $p$>1.

\subsection{Organization of the paper}

The rest of the paper is structured as follows. After introducing the necessary notation, we present our main results: the discrete $p$-Birman and weighted $p$-Hardy inequalities. These inequalities are proven in Section~\ref{sec:proofs}. In Section~\ref{sec:con_birman}, we utilize the discrete results to recover their continuous counterparts, which is used in Section~\ref{sec:optimality} to prove sharpness of all constants. Finally, we briefly discuss current research on stronger optimality of the inequalities under consideration.

\subsection{Notation for discrete derivatives}\label{subsec:notation}


We define \textit{discrete gradient} and \textit{divergence} as difference operators acting on the space of sequences $C(\N_0)$ indexed by $\N_{0}$ as
\begin{equation}
  \label{eq:def_div_grad}
  \grad u_n \coloneq u_n-u_{n-1}
  \quad\text{ and }\quad
  \ddiv u_n \coloneq u_{n+1}-u_n,
\end{equation}
with the convention that $u_{-1}\coloneq0$, i.e. $\grad u_0=u_0$. Analogously, when composing difference operators, we assume vanishing values of sequences if the index is negative, i.e., we first formally apply the composition for all indices $n\in\N_0$ and then set $u_n\coloneq0$ whenever $n<0$. With this convention, the discrete gradient and divergence commute and their composition defines the \textit{discrete Laplace operator} $\lap\coloneq\ddiv\circ\grad$ on $C(\N_0)$ by the formula
\begin{equation*}
  \lap u_n = u_{n-1}-2u_n+u_{n+1},
\end{equation*}
for all $n\in\N_0$. By the convention, $\lap u_0=u_1-2u_0$.

\begin{rem*}
Previous works~\cite{huang-ye_24-hardy,stampach-waclawek_24-birman} formulate the discrete Birman inequality~\eqref{eq:discrete_Birman_ineq} with the gradient term $\grad^{\ell}u_n$ replaced by the fractional Laplacian
$(-\lap)^{\ell/2}u_n$, summed over $n\geq\lceil \ell/2 \rceil$, where $(-\lap)^{\ell/2}$ is the composition of $\ell/2$ discrete Laplacians $-\Delta$ if $\ell$ is even, while $(-\lap)^{\ell/2} \coloneq \grad\circ(-\lap)^{(\ell-1)/2}$ if $\ell$ is odd.
With our convention, $(-\lap)^{\ell/2}u_n=(-1)^{\lfloor\ell/2\rfloor} \grad^\ell u_{n+\lfloor\ell/2\rfloor}$ for all $n\in\N_0$, making both formulations equivalent. To avoid unnecessary use of fractional powers and floor/ceiling notation in this paper, we adhere to the simpler representation as in \eqref{eq:discrete_Birman_ineq}.
\end{rem*}

\subsection{Main results}

Our main result is the discrete $p$-Birman inequality on the integer half-line.

\begin{thm}[discrete $p$-Birman inequality]
  \label{thm:p-birman_dis}
  Let $\ell\in\N$ and $p>1$. Then for all $u\in C_{0}(\N_{0})$ such that $u_{n}=0$ for $n<\ell$, the inequality
  \begin{equation}
    \label{eq:p-birman_dis}
    \lsum{\ell} \abs{\grad^\ell u_n}^p
      \ge B_{p}^{(\ell)} \lsum{\ell} \frac{\abs{u_n}^p}{n^{\ell p}}
  \end{equation}
  holds with the optimal constant 
  \begin{equation}
  \label{eq:p-birman_constant}
  B_{p}^{(\ell)}\coloneq\paren{\frac{p-1}{p}}_{\!\ell}^{\!p}.
  \end{equation}
\end{thm}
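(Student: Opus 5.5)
We propose to prove the inequality by induction on $\ell$, iterating a family of weighted discrete $p$-Hardy inequalities. The constant factorizes as
\[
B_p^{(\ell)}=\prod_{k=0}^{\ell-1}\paren{\frac{p-1}{p}+k}^{\!p},
\]
which suggests establishing, for each $k\in\N_0$, a weighted inequality of the form
\begin{equation*}
\lsum{k+1}\frac{\abs{\grad v_n}^p}{n^{kp}}\ \ge\ \paren{\frac{p-1}{p}+k}^{\!p}\lsum{k+1}\frac{\abs{v_n}^p}{n^{(k+1)p}}
\end{equation*}
for compactly supported $v$ vanishing for $n\le k$. Applying this with $v=\grad^{\ell-1-k}u$ for $k=0,1,\dots,\ell-1$ and chaining the resulting bounds yields \eqref{eq:p-birman_dis}. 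Two points need care in this chaining. First, $\grad^{j}u$ vanishes for $n<\ell$, since $u$ does, so the support conditions hold at every step. Second, each application lowers the order of the difference by one and raises the weight exponent by $p$; the summation ranges then match up to harmless indices, where the terms vanish.

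The core step is the weighted inequality. Its continuous analogue,
\[
\int\abs{\varphi'}^p x^{-kp}\ge\paren{\frac{p-1}{p}+k}^{\!p}\int\abs{\varphi}^px^{-(k+1)p},
\]
is the classical weighted Hardy inequality with negative power weight. Since $v_n=\sum_{m\le n}\grad v_m$ is a partial sum, the natural discrete route is a Hardy/Copson-type inequality for sums. With $a_m=\grad v_m$ and $|v_n|\le\sum_{m\le n}|a_m|$, we need
\[
\sum_n n^{-(k+1)p}\paren{\sum_{m\le n}|a_m|}^p\le C\sum_m m^{-kp}|a_m|^p .
\]
Substituting $b_m=m^{-k}|a_m|$, this becomes a weighted Hardy inequality with weights $m^k$ inside the sum, which is a Copson-type inequality with a negative exponent, as the abstract hints.

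To prove it, we plan to use the standard Hölder-with-auxiliary-weights technique, or equivalently a discrete ``ground state'' argument. We choose test weights $w_m\approx m^{\alpha}$ with $\alpha$ tuned as in the continuous case, apply Hölder to the inner sum, interchange the order of summation, and bound the tail sums $\sum_{n\ge m}n^{-(k+1)p}(\ldots)$ by the corresponding integrals. Monotonicity of the relevant power functions gives these comparisons the correct direction.

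We expect the main obstacle to be obtaining the sharp constant $(k+(p-1)/p)^p$ in the discrete setting. Crude integral comparisons lose constants at small $n$, so the auxiliary weights must be chosen exactly: Pochhammer/Gamma-ratio weights such as $\Gamma(m+\beta)/\Gamma(m)$ in place of $m^\beta$, for which telescoping identities hold exactly. Failing that, we would use a convexity (tangent-line) estimate $|x|^p\ge|y|^p+p|y|^{p-2}y(x-y)$ with a supersolution sequence $g_n$ satisfying the discrete weighted $p$-Laplace inequality with the right constant.

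Optimality follows by a separate argument. We approximate via scaling, $u_n=\varphi(n/N)$ with $N\to\infty$, which recovers the continuous inequality \eqref{eq:p-Birman_con}. The sharpness of the continuous constant is then shown by the usual test functions $x^{\ell-1+(p-1)/p+\epsilon}$, suitably cut off, with $\epsilon\to0$.
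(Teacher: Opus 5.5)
There is a genuine gap. The weighted inequality you take as the core step is false for $k\ge 1$, so no choice of H\"older weights or Gamma-ratio supersolution can establish it.

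Take $p=2$, $k=2$ and $v=(0,0,0,2,4,5,4,2,0,0,\dots)$, so that $v_n=0$ for $n\le 2$. Then
\[
\sum_{n\ge3}\frac{|\grad v_n|^2}{n^{4}}=\frac{4}{81}+\frac{1}{64}+\frac{1}{625}+\frac{1}{1296}+\frac{4}{2401}+\frac{1}{1024}\approx 0.0700,
\]
whereas
\[
\frac{25}{4}\sum_{n\ge3}\frac{|v_n|^2}{n^{6}}=\frac{25}{729}+\frac{100}{4096}+\frac{1}{100}+\frac{100}{46656}+\frac{25}{117649}\approx 0.0711 .
\]
In your chain the link $k=\ell-1$ is applied to $v=u$ itself, whose only constraint is $u_n=0$ for $n\le \ell-1$. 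So the argument already breaks for $\ell=3$, $p=2$. The theorem itself still holds for this $u$; it is the intermediate link that fails.

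The case $k=1$, $p=2$, which is the last link for the Rellich case $\ell=2$, also fails, only more narrowly. Let $g$ solve $(n+1)^{-2}(g_{n+1}-g_n)=n^{-2}(g_n-g_{n-1})-\frac94 n^{-4}g_n$ with $g_1=0$, $g_2=1$. Then $g_n>0$ for $2\le n\le 18$ but $g_{19}<0$. Setting $v_n=g_n$ for $2\le n\le 18$ and $v_n=0$ otherwise, summation by parts gives $\sum_{n\ge2}n^{-2}|\grad v_n|^2-\frac94\sum_{n\ge2}n^{-4}|v_n|^2=19^{-2}g_{18}g_{19}<0$.

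The failure is structural. You attach $n^{-kp}$ to the right endpoint of the edge $(n-1,n)$, which is too small when the exponent is negative. At the sharp constant this $O(1/n)$ relative loss is not absorbed. This is exactly the paper's remark that the Copson inequality \eqref{eq:copson} fails for $\alpha<0$.

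The missing idea is an index shift. The paper writes $\grad^\ell u_n=\grad^{\ell-1}(\ddiv u)_{n-1}$ and applies the induction hypothesis to $\ddiv u$. The weight $n^{-(\ell-1)p}$ then multiplies $|u_{n+1}-u_n|^p$, i.e.\ it sits at the left endpoint. The step is closed by Theorem~\ref{thm:weighted_hardy},
\[
\sum_{n\ge1}n^{\alpha}|\grad w_n|^p\ge C_p(\alpha)\sum_{n\ge1}(n+1)^{\alpha-p}|w_n|^p,
\]
with $\alpha=-(\ell-1)p$ and $w=\shift u$; the identity $B_p^{(\ell-1)}C_p(-(\ell-1)p)=B_p^{(\ell)}$ is your factorization. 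Unrolled, the correct chain carries the weight $(m-j)^{-(\ell-j)p}$ on $|\grad^j u_m|^p$ for $0\le j<\ell$, rather than $m^{-(\ell-j)p}$.

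The shifted inequality is exactly what Proposition~\ref{prop:weighted-hardy} delivers with $V_n=(n-1)^{\alpha}$ and the Gamma ratio $\g_n=\Gamma(n+1-\frac{\alpha+1}{p})/\Gamma(n)$, once convexity of the auxiliary function $H$ is checked. Your optimality argument (scaling to the continuum, then cut-off power functions) is fine and matches the paper.
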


\begin{rem}
 Inequality~\eqref{eq:p-birman_dis} can be equivalently written in discrete integral form. By solving the difference 
 equation $\grad^{\ell}u_n = v_n$ for $n\geq\ell$, with the initial condition $u_n=0$ for $n<\ell$, one finds
 \[
 u_n=\sum_{\ell\leq k_1\leq\dots\leq k_{\ell}\leq n}v_{k_1}=\sum_{j=\ell}^{n}\scalebox{1.1}{$\binom{n+\ell-1-j}{\ell-1}$}\,v_j
 \]
 for $n\geq\ell$. Substituting into~\eqref{eq:p-birman_dis} yields the inequality
 \begin{equation}
 \label{eq:p-birman_dis_integral_form}
 \lsum{\ell} \abs{v_n}^p
      \ge B_{p}^{(\ell)} \lsum{\ell}\bigg|\frac{1}
      {n^{\ell}}\sum_{k=\ell}^{n}\scalebox{1.1}{$\binom{n+\ell-1-k}{\ell-1}$}\,v_k\bigg|^{p}
 \end{equation}
 for any $v\in C_0(\N)$. In fact, using Fatou's lemma, inequality \eqref{eq:p-birman_dis_integral_form} extends to all complex sequences $v$. After shifting the index, Theorem~\ref{thm:p-birman_dis} states that the generalized discrete Hardy operator
 \[
 \mathrm{H}^{(\ell)}v_{n}:=\frac{1}{(n+\ell-1)^{\ell}}\sum_{k=1}^{n}\scalebox{1.1}{$\binom{n+\ell-1-k}{\ell-1}$}\,v_k
 \]
 is bounded on $\ell^{p}(\N)$, for all $1<p<\infty$, with the operator norm equal to $1/(1-1/p)_{\ell}$.
\end{rem}

Inequality \eqref{eq:p-birman_dis} is proved by an iterative application of a weighted discrete $p$-Hardy inequality, similar to the classical \textit{Copson inequality}
\begin{equation} \label{eq:copson}
    \lsum{1} n^\alpha \abs{\grad u_n}^p \ge \paren{\frac{p-\alpha-1}{p}}^{\!p} \lsum{1} n^{\alpha-p} \abs{u_n}^p,
\end{equation}
where $u\in C_0(\N_0)$ with $u_0=0$ and $0\leq\alpha<p-1$. The non-negativity of $\alpha$ in~\eqref{eq:copson} is essential, as the inequality fails for $\alpha<0$, which is precisely the regime required for our proof of \eqref{eq:p-birman_dis}. If $\alpha<0$, a slightly smaller weight is required on the right-hand side. We present this version of Copson inequality in the next theorem, as it may be of independent interest. Copson's original work \cite{copson_28} contains a proof of~\eqref{eq:copson}, the optimality of the constant is established for example in \cite[Corollary 3]{bennett_87}.

\begin{thm}\label{thm:weighted_hardy}
  For all $p>1$, $\alpha<0$, and $u\in C_{0}(\N_0)$ with $u_0=0$, the inequality
  \begin{equation}\label{eq:p-Hardy_weight}
      \lsum{1} n^\alpha \abs{\grad u_n}^p \ge C_{p}(\alpha) \lsum{1} (n+1)^{\alpha-p} \abs{u_n}^p,
  \end{equation}
  holds with the optimal constant
   \begin{equation}
    \label{eq:weight_constant}
       C_{p}(\alpha)\coloneq\paren{\frac{p-\alpha-1}{p}}^p.
   \end{equation}
\end{thm}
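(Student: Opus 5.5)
We will prove \eqref{eq:p-Hardy_weight} by the ground-state (weight-function) method, which is the standard route to the Hardy and Copson inequalities. Set $\beta\coloneq (p-\alpha-1)/p>1-1/p$ (positive since $\alpha<0$), so that $C_p(\alpha)=\beta^p$. The plan has three steps: a pointwise discrete inequality that turns the left-hand side into a ``potential'' term, a choice of test weight, and a comparison of the resulting potential with $\beta^p(n+1)^{\alpha-p}$. Optimality is then handled separately.

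For the pointwise step, take a positive weight $g$ on $\N$ and write $u_n=g_n w_n$. We use the elementary discrete Picone-type inequality (obtained from convexity of $t\mapsto|t|^p$, as in Fischer--Keller--Pogorzelski type $p$-Hardy proofs)
\[
\lsum{1} n^\alpha |\grad u_n|^p \ge \lsum{1} V_n |u_n|^p,
\qquad V_n\coloneq \frac{-\ddiv\bigl(n^\alpha (\grad g_n)^{p-1}\bigr)\big|_{\text{shifted}}}{g_n^{p-1}},
\]
where precisely $V_n g_n^{p-1}=n^\alpha(\grad g_n)^{p-1}-(n+1)^\alpha(\grad g_{n+1})^{p-1}$ for $n\ge1$, with $g_0=0$ and $g$ increasing. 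Derivation: by convexity, for $n\ge1$, $|u_n-u_{n-1}|^p\ge (\grad g_n)^{p-1}\bigl(|u_n|^p/g_n^{p-1}-|u_{n-1}|^p/g_{n-1}^{p-1}\bigr)$. This is H\"older/Jensen applied with weights $g_{n-1}/g_n$ and $\grad g_n/g_n$, and it becomes exact with the first term when $n=1$. Multiplying by $n^\alpha$ and summing by parts (finite support) gives the claim.

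Next choose $g_n$. The continuous heuristic suggests $x^{\gamma}$ with $\gamma=(p-\alpha-1)/p\cdot\frac{?}{}$; concretely, the optimizer in the continuous weighted Hardy problem is $x^{(p-1-\alpha)/p}$, i.e.\ $\gamma=\beta$. To make the discrete computation telescope cleanly, we take a product-type weight, either $g_n=n^\beta$ or a Pochhammer ratio $g_n=\Gamma(n+\beta)/\Gamma(n)$, for which $\grad g_n$ is an explicit ratio of Gamma functions. The goal is $V_n\ge \beta^p(n+1)^{\alpha-p}$ for every $n\ge1$. Writing $t=1/n$, this reduces to a one-variable inequality of the form
\[
(1)^{\alpha}\bigl(1-(1-t)^\beta\bigr)^{p-1}-(1+t)^\alpha\bigl((1+t)^\beta-1\bigr)^{p-1}\ \ge\ \beta^p t^{p}(1+t)^{\alpha-p},
\]
after dividing through by $n^{\alpha+(\beta-1)(p-1)}$. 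Using $\alpha-p=-\beta p -1+\ldots$, the exponents match and the comparison of both sides is homogeneous in $n$, so the inequality becomes a statement for $t\in(0,1]$. \textbf{This is the main obstacle}: the sign of $\alpha$ enters here, and this is exactly where the Copson weight $n^{\alpha-p}$ fails but $(n+1)^{\alpha-p}$ should succeed. We would first try to prove it by expanding in integrals, $1-(1-t)^\beta=\beta\int_{1-t}^1 s^{\beta-1}ds$ and so on, and applying convexity/concavity of $s\mapsto s^{\beta-1}$ together with the mean value theorem for $x\mapsto x^\alpha y^{p-1}$. If $g_n=n^\beta$ proves awkward, we would switch to the Gamma-ratio weight, for which $\grad g_n$ is again a Gamma ratio and Gautschi/Wendel bounds yield the needed monotonicity. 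The case $n=1$ is checked directly.

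Finally, for optimality, the inequality $C_p(\alpha)\le$ best constant follows by testing with truncated $u_n=n^{\beta-\varepsilon}$ (with smooth cut-off at large $N$) and letting $\varepsilon\to0^+$. Both sides are then dominated by the tails $\sum n^{-1-\varepsilon p}$, and their ratio tends to $\beta^p$ because $\grad n^{\beta-\varepsilon}\sim(\beta-\varepsilon)n^{\beta-\varepsilon-1}$ and $(n+1)^{\alpha-p}\sim n^{\alpha-p}$. Alternatively, we could defer optimality to the continuous limit argument the paper announces, in which sampling $\varphi(n/N)$ recovers the continuous weighted Hardy inequality with sharp constant $\beta^p$.
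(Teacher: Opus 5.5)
There is a genuine gap, and it sits exactly at the step you label the main obstacle. The bound $V_n\ge\beta^p(n+1)^{\alpha-p}$ is not just unproved: it is false at $n=1$ for both of your candidate ground states in part of the parameter range. Both candidates vanish at $0$ and are paired with the unshifted weight $n^\alpha$, so $V_1=1-2^\alpha\bigl((g_2-g_1)/g_1\bigr)^{p-1}$. This gives $V_1=1-2^\alpha\beta^{p-1}$ for $g_n=\Gamma(n+\beta)/\Gamma(n)$, and $V_1=1-2^\alpha(2^\beta-1)^{p-1}$ for $g_n=n^\beta$.

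Take $p=30$, $\alpha=-5$, so $\beta=17/15$. Then $2^{-5}(17/15)^{29}\approx 1.18$, hence $V_1<0$ for the Gamma ratio, while the target $\beta^p2^{\alpha-p}$ is positive. The power weight is even worse, because $2^\beta-1>\beta$ when $\beta>1$. For large $p$ the quantity $\max_{\alpha<0}2^\alpha\beta^{p-1}$ grows exponentially, so this is not an edge case. Thus ``the case $n=1$ is checked directly'' cannot be carried out. No refinement of the large-$n$ analysis can rescue these choices; Gautschi/Wendel bounds are irrelevant anyway, since all Gamma factors cancel exactly in $V_n$.

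The missing idea is that the ground state need not vanish at the boundary. Since $u_0=0$, your convexity (Picone) inequality at $n=1$ stays valid for any increasing $g$ with $g_0>0$. The paper exploits this by applying the estimate with weight $(n-1)^\alpha$ to sequences with $u_0=u_1=0$ and $g_n=\Gamma(n+\beta)/\Gamma(n)$. After re-indexing, this is your scheme with the shifted ground state $g_m=\Gamma(m+1+\beta)/\Gamma(m+1)$, $g_0=\Gamma(1+\beta)>0$. The potential then becomes
\[
\beta^{p-1}\Bigl[m^\alpha(m+\beta)^{1-p}-(m+1)^{\alpha+1-p}\Bigr]
=\beta^{p-1}(m+1)^{\alpha-p+1}H\Bigl(\frac{1}{m+1}\Bigr),
\qquad
H(x)=\frac{(1-x)^\alpha}{\bigl(1-\frac{\alpha+1}{p}x\bigr)^{p-1}}-1 .
\]
For $\alpha<0$ this $H$ is strictly convex on $(0,1)$. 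The quadratic factor of $H''$ has discriminant $4\alpha(1+\alpha)^2(p-1)(p-\alpha)<0$ for $\alpha\neq-1$, and is the constant $2p$ for $\alpha=-1$. Hence $H(x)>H(0)+H'(0)x=\beta x$, which gives exactly the required $\beta^p(m+1)^{\alpha-p}$.

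Your convexity derivation of the Picone-type inequality is fine, as is your optimality argument. What must be replaced is the choice of ground state, and with it the verification of the potential bound.
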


The proofs of inequalities~\eqref{eq:p-birman_dis} and \eqref{eq:p-Hardy_weight} are developed in Section~\ref{sec:proofs} and optimality of the constants~\eqref{eq:p-birman_constant} and~\eqref{eq:weight_constant} are established in Section~\ref{sec:optimality}. Theorems~\ref{thm:p-birman_dis} and \ref{thm:weighted_hardy} generalize \cite[Theorems~1.1 and~1.2]{huang-ye_24-hardy} to the case of general $p>1$.

\section{Proofs of the inequalities} \label{sec:proofs}

\subsection{An abstract \texorpdfstring{$p$}{p}-Hardy inequality}

Our starting point is an abstract weighted $p$-Hardy inequality, whose proof has already been indicated in~\cite[Sec.~7.1]{huang-ye_24-hardy} and relies on an auxiliary inequality from~\cite[Lemma~2.6]{frank-seiringer_08-ineq}. For the reader's convenience and completeness, we restate the inequalities together with their short proofs.

\begin{lem}
  Let $p>1$. For any $t\in[0,1]$ and $z\in\C$, we have
  \begin{equation}
    \label{eq:lem:ineq}
    \abs{z-t}^p \ge (1-t)^{p-1}(\abs{z}^p-t).
  \end{equation}
\end{lem}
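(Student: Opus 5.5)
The plan is to reduce to the real case and then use convexity of $x\mapsto x^p$. The reduction comes first. Since $|z-t|\ge \big||z|-t\big|$ for real $t\ge 0$, with equality when $z=|z|$, it suffices to prove
\[
  |r-t|^p \ge (1-t)^{p-1}(r^p-t)
\]
for all $r\ge0$ and $t\in[0,1]$. The endpoint cases are immediate. If $t=1$ the right-hand side vanishes. If $t=0$ both sides equal $r^p$. So I assume $t\in(0,1)$ from now on.

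Next I split according to the size of $r$. If $r^p\le t$, the right-hand side is nonpositive and there is nothing to prove. The remaining case is $r^p>t$. Here $r>t^{1/p}\ge t$, since $t\le1$, so $|r-t|=r-t>0$. I then rewrite $r$ as a convex combination:
\[
  r = t\cdot 1 + (1-t)\cdot\frac{r-t}{1-t}.
\]
Applying Jensen's inequality to the convex function $x\mapsto x^p$ on $[0,\infty)$ gives
\[
  r^p \le t + (1-t)\Big(\frac{r-t}{1-t}\Big)^{p} = t+(1-t)^{1-p}(r-t)^p .
\]
Multiplying by $(1-t)^{p-1}>0$ and rearranging yields $(r-t)^p\ge (1-t)^{p-1}(r^p-t)$, which is the claim.

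The only thing to double-check is that the Jensen step is legitimate. This needs the point $(r-t)/(1-t)$ to lie in the domain $[0,\infty)$, which holds because $r>t$ in this case. With that confirmed, no step appears to be a real obstacle. An alternative route would be to fix $t$ and minimize $f(r)=|r-t|^p-(1-t)^{p-1}(r^p-t)$ by calculus. One would find a critical point at $r=1$ with $f(1)=0$, but the convexity argument above avoids this computation.
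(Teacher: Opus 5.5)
Your proof is correct. The reduction to $r=|z|\ge 0$ via $|z-t|\ge\bigl||z|-t\bigr|$ and the split according to whether $r^p\le t$ are the same as in the paper. The key step is different. The paper divides by $z^p-t>0$ and shows by differentiation that $z\mapsto (z-t)^p/(z^p-t)$ attains its minimum on $(t^{1/p},\infty)$ at $z=1$, with value $(1-t)^{p-1}$. You instead write $r=t\cdot 1+(1-t)\cdot\frac{r-t}{1-t}$ and apply Jensen to $x\mapsto x^p$. The calculus route makes the extremal point $z=1$ explicit; in your argument it appears only as the equality case $\frac{r-t}{1-t}=1$ of strict convexity. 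Your route needs no computation, and it yields more than you used. Since $w\mapsto|w|^p$ is convex on all of $\C$, the same combination $z=t\cdot 1+(1-t)\cdot\frac{z-t}{1-t}$ gives $|z|^p\le t+(1-t)^{1-p}|z-t|^p$ directly for every $z\in\C$ and $t\in[0,1)$. So the reduction to real $r$ and the case distinction $r^p\le t$ could both be dropped.
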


\begin{proof}
  The inequality is trivially true for $t\in\{0,1\}$. Fix $t\in(0,1)$. It is easy to see that, for fixed $\abs{z}$, the minimum of the left-hand side of \eqref{eq:lem:ineq} is attained for $z>0$. Since the right-hand side is non-positive for $z\le t^{1/p}$, it remains to check the inequality for $z>t^{1/p}$. Dividing~\eqref{eq:lem:ineq} by $z^p-t>0$, we obtain
  \begin{equation*}
    \frac{(z-t)^p}{z^p-t} \ge (1-t)^{p-1}.
  \end{equation*}
 Computing the derivative with respect to $z$ of the left-hand side shows that this function has the global minimum in $(t^{1/p},\infty)$ attained at $z=1$ with the value $(1-t)^{p-1}$, which completes the proof.
\end{proof}

\begin{prop}\label{prop:weighted-hardy}
  Let $p>1$, $V_n\ge0$ for all $n\in\N$, $\g_{n+1}\geq\g_n>0$ for all $n\in\N$, and $\g_0=0$. Then for any $u\in\C_{0}(\N_0)$ with $u_0=0$, we have the inequality
  \begin{equation}
    \label{eq:prop:weighted-hardy}
    \lsum{1} V_n \abs{\grad u_n}^p
      \ge - \lsum{1} \frac{\ddiv(V(\grad\g)^{p-1})_n}{\g_n^{p-1}} \abs{u_n}^p.
  \end{equation}
\end{prop}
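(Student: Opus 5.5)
The plan is a ground-state type argument. I will apply \eqref{eq:lem:ineq} termwise to each difference $\grad u_n$, with $t$ given by the ratio $\g_{n-1}/\g_n$. This yields a telescoping-type lower bound, and a discrete summation by parts then finishes the proof.

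\emph{Step 1 (pointwise bound).} Fix $n\ge2$ and put $t\coloneq \g_{n-1}/\g_n$. By the monotonicity and positivity assumptions on $\g$, we have $t\in(0,1]$. I will use the homogeneous form of \eqref{eq:lem:ineq}: for $a,b\in\C$ and $\lambda>0$ with $b=t\lambda e^{i\theta}$, multiplying \eqref{eq:lem:ineq} (applied with $z=a e^{-i\theta}/\lambda$) by $\lambda^p$ gives
\[
|a-b|^p\ge (1-t)^{p-1}\bigl(|a|^p-t\lambda^p\bigr).
\]
Take $a=u_n$, $b=u_{n-1}$, and $\lambda=|u_{n-1}|\g_n/\g_{n-1}$. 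If $u_{n-1}=0$, the bound $|u_n|^p\ge(1-t)^{p-1}|u_n|^p$ is immediate. Since $1-t=\grad\g_n/\g_n$ and $t\lambda^p=|u_{n-1}|^p(\g_n/\g_{n-1})^{p-1}$, this becomes
\[
|\grad u_n|^p\ge (\grad\g_n)^{p-1}\Bigl(\frac{|u_n|^p}{\g_n^{p-1}}-\frac{|u_{n-1}|^p}{\g_{n-1}^{p-1}}\Bigr).
\]
For $n=1$ we have $u_0=0$ and $\g_0=0$, so $|\grad u_1|^p=|u_1|^p=(\grad\g_1)^{p-1}|u_1|^p/\g_1^{p-1}$. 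Hence the same inequality holds for $n=1$, provided we interpret the second term as $0$.

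\emph{Step 2 (summation by parts).} Multiply the bound from Step 1 by $V_n\ge0$ and set $W_n\coloneq V_n(\grad\g_n)^{p-1}$ and $a_n\coloneq |u_n|^p/\g_n^{p-1}$ for $n\ge1$, with $a_0\coloneq0$. Summing over $n\ge1$ gives
\[
\lsum{1}V_n|\grad u_n|^p\ge\lsum{1}W_n(a_n-a_{n-1}).
\]
Since $u$ is compactly supported, all sums are finite. Shifting the index in the second part and using $a_0=0$ yields
\[
\lsum{1}W_na_n-\lsum{1}W_{n+1}a_n=-\lsum{1}\ddiv W_n\,a_n.
\]
This is exactly the right-hand side of \eqref{eq:prop:weighted-hardy}.

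The only delicate point is Step 1. There I must make the correct choice of scaling and phase rotation in \eqref{eq:lem:ineq} so that $t$ lies in $[0,1]$ and the produced terms telescope. I also need to check the degenerate cases separately: $n=1$ (where $\g_0=0$) and $u_{n-1}=0$. The rest is routine bookkeeping.
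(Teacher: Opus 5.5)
Your proposal is correct and follows the paper's proof essentially verbatim. Your rescaled choice $z=u_n e^{-i\theta}/\lambda$ is exactly the paper's substitution $z=\varphi_n/\varphi_{n-1}$ with $\varphi_n=u_n/\g_n$ and $t=\g_{n-1}/\g_n$; the degenerate cases $n=1$ and $u_{n-1}=0$ are handled the same way, and the conclusion comes from the same multiplication by $V_n\ge0$ followed by an index shift.
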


\begin{proof}
First, we show that, under the assumptions, the inequality 
\begin{equation}
  \label{eq:pr:prop-1}
  \abs{u_n-u_{n-1}}^p
    \ge \paren{\frac{\abs{u_n}^p}{\g_n^{p-1}}-\frac{\abs{u_{n-1}}^p}{\g_{n-1}^{p-1}}}
      \paren{\g_n-\g_{n-1}}^{p-1}
\end{equation}
holds for all $n\in\N$, where the term $\abs{u_0}^p/\g_0^{p-1}$ is to be interpreted as $0$. If $n=1$ or $u_{n-1}=0$, inequality~\eqref{eq:pr:prop-1} follows directly from the monotonicity of $\g$. For $n\ge2$ such that $u_{n-1}\neq0$, we temporarily set  $\varphi_n\coloneq u_n/\g_n$ and substitute for $z=\varphi_n/\varphi_{n-1}\in\C$ and $t=\g_{n-1}/\g_n\in(0,1]$ into~\eqref{eq:lem:ineq}. It yields the inequality
\begin{equation*}
  \abs{\frac{\varphi_n}{\varphi_{n-1}}-\frac{\g_{n-1}}{\g_n}}^p
    \ge \paren{1-\frac{\g_{n-1}}{\g_n}}^{p-1}
      \paren{\abs{\frac{\varphi_n}{\varphi_{n-1}}}^p-\frac{\g_{n-1}}{\g_n}}.
\end{equation*}
Multiplying this inequality by $\g_n^p\abs{\varphi_{n-1}}^p\ge0$ implies \eqref{eq:pr:prop-1}.

Now, by multiplying both sides of \eqref{eq:pr:prop-1} by $V_n\geq0$ and summing over $n\in\N$, we obtain
\[
  \lsum{1} V_n\abs{\grad u_n}^p
    \ge \lsum{1} V_n (\grad\g_n)^{p-1} \frac{\abs{u_n}^p}{\g_n^{p-1}}
      - \lsum{1} V_{n+1} (\grad\g_{n+1})^{p-1} \frac{\abs{u_n}^p}{\g_n^{p-1}},
\]
which is the inequality~\eqref{eq:prop:weighted-hardy}. The proof is complete.
\end{proof}

\subsection{Proof of the weighted \texorpdfstring{$p$}{p}-Hardy inequality \eqref{eq:p-Hardy_weight}}

We prove \eqref{eq:p-Hardy_weight} by choosing $V_n$ in Proposition~\ref{prop:weighted-hardy} as a power sequence and $\g_n$ as a suitable Gamma ratio. The specific choice of the parameter sequence $\g$ is essential to get a tight inequality.

\begin{proof}[Proof of inequality \eqref{eq:p-Hardy_weight}] Suppose $p>1$ and $\alpha<0$.
We will apply Proposition~\ref{prop:weighted-hardy} with the sequences
\begin{equation} \label{eq:def_V_g}
    V_{n}\coloneq 
    \begin{cases} 
    0 & \mbox{ if } n=1, \\
    (n-1)^{\alpha} & \mbox{ if } n\geq2, 
    \end{cases}
    \qquad 
    \g_{n}\coloneq 
    \begin{cases} 
    0 & \mbox{ if } n=0, \\
    \Gamma\bigl(n+1-\frac{\alpha+1}{p}\bigr)\big/\,\Gamma(n) & \mbox{ if } n\geq1, 
    \end{cases}
\end{equation}
where $\Gamma$ is the Euler Gamma function. Since for all $n\geq2$, we have
\[
\g_{n}-\g_{n-1}=\frac{p-\alpha-1}{p}\frac{\Gamma\bigl(n-\frac{\alpha+1}{p}\bigr)}{\Gamma(n)}>0,
\]
the sequence $\g$ is increasing. As other assumptions of Proposition~\ref{prop:weighted-hardy} are fulfilled trivially, we obtain the inequality   
\begin{equation*}
  \lsum{2} (n-1)^\alpha \abs{\grad u_n}^p \ge \lsum{2} \rho_n \abs{u_n}^p
\end{equation*}
for all $u\in C_{0}(\N_0)$ with $u_0=u_1=0$ (here we additionally require that $u_1=0$), where 
\begin{equation}\label{eq:def_rho}
  \rho_n\equiv -\frac{\ddiv(V(\grad\g)^{p-1})_n}{(\g_n)^{p-1}}
    = \paren{\frac{p-\alpha-1}{p}}^{p-1} n^{\alpha-p+1}\,H\!\left(\frac{1}{n}\right)
\end{equation}
and $H$ is an auxiliary function defined on interval $[0,1)$ by
\begin{equation*}
  H(x) \coloneq \frac{\paren{1-x}^\alpha}{\big(1-\frac{\alpha+1}{p}x\big)^{p-1}}-1.
\end{equation*}

In order to establish the inequality~\eqref{eq:p-Hardy_weight}, it suffices to show that $H$ satisfies
\begin{equation}
 \label{eq:ineq_H_aux}
  H(x) > \frac{p-\alpha-1}{p}\,x, \quad\forall x\in(0,1),
\end{equation}
which we verify in the rest of the proof. Since $H(0)=0$ and $H'(0)=(p-\alpha-1)/p$, inequality~\eqref{eq:ineq_H_aux} can be written as $H(x)>H(0)+H'(0)x$, and so \eqref{eq:ineq_H_aux} holds true if $H$ is strictly convex on $(0,1)$. A routine calculation yields
\[
  H''(x)= \frac{p-\alpha-1}{p^2}\frac{(1-x)^{\alpha-2}}{\big(1-\frac{\alpha+1}{p}x\big)^{p+1}}
    \left[(p-\alpha)(1+\alpha)^2x^2-2(1+\alpha)(p-\alpha)x+p(1-\alpha)\right].
\]
Since the discriminant $4\alpha(1+\alpha)^2(p-1)(p-\alpha)$ of the quadratic term is negative, we conclude that $H''(x)>0$ for all $x\in(0,1)$, indeed.
\end{proof}

\begin{rem}
The Copson inequality \eqref{eq:copson} can also be established using the approach described above. Assuming $0\leq\alpha< p-1$, we apply Proposition~\ref{prop:weighted-hardy} with $V_{n} \coloneq n^{\alpha}$ and $\g_n$ defined in \eqref{eq:def_V_g}. This yields the inequality
\begin{equation}
    \label{eq:improved_Copson}
    \lsum{1} n^\alpha |\grad u_n|^p \ge \lsum{1} \rho_n |u_n|^p,
\end{equation}
with $\rho_n$ given by \eqref{eq:def_rho}, but with the function $H$ redefined on $[0,1]$ by
\[
    H(x) \coloneq \left(1-\frac{\alpha+1}{p}x\right)^{1-p} - (1+x)^\alpha.
\]
Although this version of $H$ is no longer convex on $(0,1)$, inequality \eqref{eq:ineq_H_aux} remains true and can be verified by elementary means. The Copson inequality \eqref{eq:copson} then follows. 
Furthermore, inequality \eqref{eq:improved_Copson} offers an improved version of the Copson inequality, which can be of independent interest, as the existing literature on such refinements, for example
\cite{das-manna_23-weight_hardy, DasManna2025_ImprovedCopson, das-man-pau_25}, seems to consider the special case $p=2$ only.
\end{rem}

\subsection{Proof of the \texorpdfstring{$p$}{p}-Birman inequality \eqref{eq:p-birman_dis}}

Throughout this proof, we will make use of the forward shift operator $\shift$ acting on sequences by
\[
  \shift u_n \coloneq u_{n+1}, \quad n\in\N_{0}.
\]

\begin{proof}[Proof of inequality~\eqref{eq:p-birman_dis}]
The proof proceeds by induction in $\ell\in\N$. For $\ell=1$, inequality \eqref{eq:p-birman_dis} is the classical $p$-Hardy inequality \eqref{eq:p-Hardy_dis}.

Assume $\ell\ge2$ and pick $u\in C_0(\N_0)$ with $u_n=0$ for all $n<\ell$. Recalling definitions of the discrete gradient, divergence, and our convention, see Section \ref{subsec:notation}, we have equalities $\ddiv=\grad\circ\shift=\shift\circ\grad$ and $\ddiv\circ\grad=\grad\circ\ddiv$. It follows that
\begin{equation*}
    \lsum{\ell} \abs{\grad^\ell u_n}^p
        =\lsum{\ell-1}\abs{\grad^{\ell-1}\ddiv u_n}^p
        \ge B_p^{(\ell-1)} \lsum{\ell-1}\frac{\abs{\ddiv u_n}^p}{n^{(\ell-1)p}},
\end{equation*}
where we applied the induction hypothesis on the sequence $\ddiv u\in C_0(\N_0)$, which satisfies $\ddiv u_{n}=0$ if $n<\ell-1$. Using once more the identity $\ddiv=\grad\circ\shift$ and applying the already proven inequality \eqref{eq:p-Hardy_weight} with $\alpha=-(\ell-1)p<0$ and the sequence $\shift u\in C_0(\N_0)$, which satisfies $\shift u_0=u_1=0$, we further estimate
\begin{equation*}
  \lsum{\ell-1} \frac{\abs{\ddiv u_n}^p}{n^{(\ell-1)p}}
    = \lsum{1} \frac{\abs{\grad\shift u_n}^p}{n^{(\ell-1)p}}
    \ge C_{p}(-(\ell-1)p)\lsum{1} \frac{\abs{\shift u_n}^p}{(n+1)^{\ell p}}.
\end{equation*}
Shifting the index in the last sum and noticing that $B_{p}^{(\ell-1)}\,C_{p}(-(\ell-1)p)=B_{p}^{(\ell)}$, see formulas~\eqref{eq:p-birman_constant} and~\eqref{eq:weight_constant}, the above inequalities combine to
\begin{equation*}
  \lsum{\ell} \abs{\grad^\ell u_n}^p
    \ge B_{p}^{(\ell)} \lsum{\ell} \frac{\abs{u_n}^p}{n^{\ell p}}.
\end{equation*}
The proof of inequality~\eqref{eq:p-birman_dis} is complete.
\end{proof}

\section{Continuous \texorpdfstring{$p$}{p}-Birman inequality}
\label{sec:con_birman}

In this section, we derive the continuous $p$-Birman inequality from its discrete counterpart \eqref{eq:p-birman_dis}, following an approach utilized for $p=2$ in \cite{huang-ye_24-hardy}. Beyond the fact that the transition from discrete to continuous inequalities is interesting in itself, this step is motivated by two other objectives. First, it provides an alternative proof of the $p$-Birman inequality to the standard procedure, which naturally avoids the discrete formulation and is indicated in Remark~\ref{rem:fritz}. Second, we take advantage of this passage to prove the optimality of constants in Theorems~\ref{thm:p-birman_dis} and~\ref{thm:weighted_hardy} in the subsequent section.

\begin{thm}[continuous $p$-Birman inequality]
  \label{thm:p-Birman_con}
  Let $\ell\in\N$ and $p>1$. Then for any $\varphi\in C^\infty_0(\R_+)$, we have the inequality
  \begin{equation}
    \label{eq:thm:p-birman_con}
    \int_0^\infty \big|\varphi^{(\ell)}(x)\big|^{p}\dd x
      \ge B_{p}^{(\ell)} \int_0^\infty \frac{\abs{\varphi(x)}^p}{x^{\ell p}}\dd x,
  \end{equation}
  where the constant $B_{p}^{(\ell)}$ is given by formula~\eqref{eq:p-birman_constant} and is sharp.
\end{thm}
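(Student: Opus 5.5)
We plan to derive \eqref{eq:thm:p-birman_con} from the discrete inequality \eqref{eq:p-birman_dis} by a discretization and scaling argument. Fix $\varphi\in C_0^\infty(\R_+)$ with $\operatorname{supp}\varphi\subset[a,b]$, $0<a<b$. For a small mesh $h>0$ define the sequence $u_n\coloneq\varphi(nh)$, $n\in\N_0$. Since $\varphi$ vanishes near $0$, we have $u_n=0$ for all $n<a/h$; in particular $u_n=0$ for $n<\ell$ once $h<a/\ell$. The sequence is also compactly supported, so Theorem~\ref{thm:p-birman_dis} applies. Multiplying \eqref{eq:p-birman_dis} by $h^{1-\ell p}$ gives
\begin{equation*}
  h\lsum{\ell}\abs{\frac{\grad^\ell u_n}{h^\ell}}^p
  \ge B_p^{(\ell)}\, h\lsum{\ell}\frac{\abs{\varphi(nh)}^p}{(nh)^{\ell p}}.
\end{equation*}

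We then let $h\to0^+$. The right-hand side is a Riemann sum of the continuous, compactly supported function $\abs{\varphi(x)}^p x^{-\ell p}$ on $[a,b]$, so it converges to $\int_0^\infty \abs{\varphi(x)}^p x^{-\ell p}\dd x$. For the left-hand side we use the backward difference formula: $\grad^\ell u_n/h^\ell$ is the $\ell$-th backward divided difference, which equals $\varphi^{(\ell)}(\xi_n)$ for some $\xi_n\in[(n-\ell)h,nh]$. Alternatively, one can write it as an average of $\varphi^{(\ell)}$ against the B-spline kernel. Hence
\[
\abs{h^{-\ell}\grad^\ell u_n-\varphi^{(\ell)}(nh)}\le \ell h\,\|\varphi^{(\ell+1)}\|_\infty
\]
uniformly in $n$. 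Only $O(1/h)$ indices contribute, namely $n\in[a/h,\,b/h+\ell]$. So the left-hand side differs from the Riemann sum $h\sum_n\abs{\varphi^{(\ell)}(nh)}^p$ by $o(1)$, using uniform continuity of $t\mapsto\abs{t}^p$ on bounded sets, and it therefore tends to $\int_0^\infty\abs{\varphi^{(\ell)}}^p\dd x$. Passing to the limit yields \eqref{eq:thm:p-birman_con}. This part is routine; the only care needed is the uniform control of the difference quotient near the edges of the support. That control is automatic because $\varphi$ is smooth on all of $\R$, after extending it by zero.

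The main obstacle is sharpness, since the inequality itself cannot give it. We would test \eqref{eq:thm:p-birman_con} on truncations of the formal extremal $x^{\ell-1+1/p}$-type power functions. Set $\beta\coloneq\ell-1/p-\varepsilon$ (or $\beta=(\ell p-1)/p$ perturbed), and take $\varphi_\varepsilon(x)=x^{\beta}$ multiplied by smooth cutoffs near $0$ and near $\infty$. A cleaner choice is $\varphi(x)=x^{\ell-1/p}\chi(\log x/R)$ with $\chi$ a fixed smooth bump and $R\to\infty$. For the pure power, $\varphi^{(\ell)}=(\beta-\ell+1)_\ell\, x^{\beta-\ell}$, and at $\beta=\ell-1/p$ one has $\abs{(\beta-\ell+1)_\ell}=((p-1)/p)_\ell$ up to reordering of the factors $(1-1/p)(2-1/p)\cdots(\ell-1/p)$. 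Both integrals then behave like $\int\abs{\chi(\log x/R)}^p\dd x/x\sim R\,\|\chi\|_p^p$. The derivatives falling on the cutoff produce terms of relative size $O(1/R)$. Hence the ratio of the two sides tends to $B_p^{(\ell)}$, which shows the constant cannot be improved. We would verify the Leibniz-rule error terms carefully: each derivative on $\chi(\log x/R)$ contributes a factor $R^{-1}x^{-1}$, so the error is bounded by $O(R^{-1})$ times the main term, via Minkowski's inequality.
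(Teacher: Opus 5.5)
Your proposal is correct and follows essentially the paper's route. For the inequality you sample $u_n=\varphi(nh)$, apply Theorem~\ref{thm:p-birman_dis}, and pass to Riemann sums, with your divided-difference bound playing the role of the Taylor-expansion Lemma~\ref{lem:dif_der}; use the B-spline averaging form there, since the mean-value form $\varphi^{(\ell)}(\xi_n)$ fails for complex-valued $\varphi$. For sharpness you test on truncations of $x^{\ell-1/p}$ via Leibniz and Minkowski, as the paper does, and the only variation is your logarithmic cutoff $\chi(\log x/R)$ in place of the paper's $\xi_N$. That cutoff makes both integrals exactly computable after $t=\log x$ and works because every Leibniz term with $j\ge1$ carries at least one factor $R^{-1}$ (not one per derivative, as you wrote, but one suffices); also, the formal extremal is $x^{\ell-1/p}$, not $x^{\ell-1+1/p}$ as you first wrote.
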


 The proof of the optimality of constant $B_{p}^{(\ell)}$ in~\eqref{eq:thm:p-birman_con} is postponed to the next section. Before proving the inequality~\eqref{eq:thm:p-birman_con}, we prepare a lemma which relates the discrete derivatives to the continuous ones.

\begin{lem}\label{lem:dif_der}
 For $N>0$, $n\in\N_0$, and $\varphi\in C_{0}^{\infty}(\R_{+})$, we denote $v_{n}^{(N)}\coloneq\varphi(n/N)$. Then for any $\ell\in\N$, we have
    \begin{equation*}
      \grad^\ell v^{(N)}_n
        = \frac{1}{N^\ell}\,\varphi^{(\ell)}\paren{\frac{n}{N}}+\bigO{\frac{1}{N^{\ell+1}}},\quad\text{as } N\to\infty,
    \end{equation*}
    uniformly for all $n\geq\ell$, i.e. the constant hidden in the Landau $\mc{O}$-symbol is $n$-independent. 
\end{lem}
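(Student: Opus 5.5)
The plan is to write $\grad^\ell$ as an explicit finite-difference formula and reduce the claim to a Taylor expansion with a uniformly bounded remainder. For $n\ge\ell$, the convention about negative indices never applies, because all indices $n-j$ with $0\le j\le\ell$ are non-negative. Iterating $\grad u_n=u_n-u_{n-1}$ therefore gives
\[
  \grad^\ell v^{(N)}_n=\sum_{j=0}^{\ell}(-1)^j\binom{\ell}{j}\,\varphi\Bigl(\frac{n-j}{N}\Bigr).
\]
This follows by a one-line induction on $\ell$, or from $\grad=1-\shift^{-1}$ on the relevant indices.

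Next, set $x=n/N$ and $h=1/N$. Expand each $\varphi(x-jh)$ by Taylor's formula around $x$ up to order $\ell$, with Lagrange remainder:
\[
  \varphi(x-jh)=\sum_{k=0}^{\ell}\frac{(-jh)^k}{k!}\varphi^{(k)}(x)+\frac{(-jh)^{\ell+1}}{(\ell+1)!}\varphi^{(\ell+1)}(\xi_j).
\]
Substituting this into the sum, I use the standard binomial identities: $\sum_{j=0}^{\ell}(-1)^j\binom{\ell}{j}j^k=0$ for $0\le k<\ell$, and the same sum equals $(-1)^\ell\ell!$ for $k=\ell$. Both follow from applying $(x\,d/dx)^k$ to $(1-x)^\ell$ at $x=1$, or from the fact that the $\ell$-th finite difference annihilates polynomials of degree less than $\ell$. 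All terms with $k<\ell$ then cancel. The $k=\ell$ term gives
\[
  \frac{(-h)^\ell}{\ell!}\,\varphi^{(\ell)}(x)\,(-1)^\ell\ell!=h^\ell\varphi^{(\ell)}(x).
\]

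The remainder is bounded in absolute value by
\[
  \frac{h^{\ell+1}}{(\ell+1)!}\,\|\varphi^{(\ell+1)}\|_\infty\sum_{j=0}^{\ell}\binom{\ell}{j}j^{\ell+1}.
\]
This bound is of the form $C_{\ell,\varphi}N^{-(\ell+1)}$ with a constant independent of $n$, since $\varphi^{(\ell+1)}$ is bounded because $\varphi$ is compactly supported and smooth.

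The only point that needs care is uniformity in $n$, and it comes out automatically: the points $\xi_j$ lie in $[(n-\ell)/N,\,n/N]\subset[0,\infty)$, and the sup bound on $\varphi^{(\ell+1)}$ is global. One should also note that $\varphi$ is defined at $0$ when extended by zero, so the expansion is valid even for $n-j=0$. No other obstacle is expected.
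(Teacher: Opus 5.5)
Your proposal is correct and follows essentially the same route as the paper: the explicit binomial formula for $\grad^\ell$, a Taylor--Lagrange expansion of each $\varphi((n-j)/N)$ to order $\ell$, and the identity that $\sum_{k=0}^{\ell}(-1)^k\binom{\ell}{k}k^j$ vanishes for $j<\ell$ and equals $(-1)^\ell\ell!$ for $j=\ell$. Your explicit remainder bound through the global supremum $\|\varphi^{(\ell+1)}\|_\infty$ simply makes concrete the paper's remark that the $\mathcal{O}$-constant is $n$-independent because $\varphi\in C_0^\infty(\mathbb{R}_+)$.
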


\begin{proof}
By definition~\eqref{eq:def_div_grad}, we have  
\[
    \grad^\ell v^{(N)}_n 
        =\sum_{k=0}^{\ell}(-1)^{k}\binom{\ell}{k}v_{n-k}^{(N)}.
\]
Moreover, by applying the Taylor–Lagrange formula, we obtain
    \begin{equation*}
      v^{(N)}_{n-k}
        = \varphi\paren{\frac{n-k}{N}}
        = \sum_{j=0}^\ell\frac{1}{j!}\,\varphi^{(j)}\paren{\frac{n}{N}}\paren{\frac{-k}{N}}^j+\bigO{\frac{1}{N^{\ell+1}}},
    \end{equation*}
where the constant in the Landau $\mc{O}$-symbol can be taken $n$-independent since $\varphi\in C_{0}^{\infty}(\R_{+})$. It follows that 
   \begin{align*}
      \grad^\ell v^{(N)}_n
        &= \sum_{k=0}^{\ell}(-1)^{k}\binom{\ell}{k}
          \left[\sum_{j=0}^\ell\frac{1}{j!}\,\varphi^{(j)}\paren{\frac{n}{N}}\paren{\frac{-k}{N}}^j+\bigO{\frac{1}{N^{\ell+1}}}\right] \\
        &= \sum_{j=0}^{\ell}\frac{(-1)^j}{j!}
          \left[\sum_{k=0}^{\ell}(-1)^{k}\binom{\ell}{k}k^j\right]
          \frac{1}{N^j}\varphi^{(j)}\paren{\frac{n}{N}}+\bigO{\frac{1}{N^{\ell+1}}},
    \end{align*}
as $N\to\infty$, uniformly in $n\geq\ell$. Now, it suffices to apply the identity \cite[Eq.~(26.8.6)]{DLMF} 
\[
\sum_{k=0}^{\ell}(-1)^{k}\binom{\ell}{k}k^j=
\begin{cases} 
0 \quad &\mbox{ for } j<\ell, \\
(-1)^{\ell}\ell! \quad &\mbox{ for } j=\ell,
\end{cases}
\]
to the term in the square brackets to finish the proof.
 \end{proof}

\begin{proof}[Proof of inequality~\eqref{eq:thm:p-birman_con}]
First notice that, by scaling of the variable $x$ in both sides of~\eqref{eq:thm:p-birman_con}, it is sufficient to prove~\eqref{eq:thm:p-birman_con} for test functions $\varphi$ supported in $(0,1)$. 

Fix $\varphi\in C^\infty_0(0,1)$, $\ell\in\N$, and define $v^{(N)}_n\coloneq\varphi(n/N)$ for all $n\in\N_0$ and $N>0$ sufficiently large, such that $v^{(N)}_n=0$ for $n<\ell$. Since also $v^{(N)}_n=0$ for all $n>N$, applying Lemma~\ref{lem:dif_der}, we infer that
\begin{align*}
    \lsum{\ell}\abs{\grad^\ell v_n^{(N)}}^p = \sum_{n=1}^N\abs{\grad^\ell v_n^{(N)}}^p
        &= \sum_{n=1}^N\abs{\frac{1}{N^\ell}\,\varphi^{(\ell)}\paren{\frac{n}{N}}+\bigO{\frac{1}{N^{\ell+1}}}}^p \\
        &= \frac{1}{N^{\ell p}}\sum_{n=1}^N \abs{\varphi^{(\ell)}\paren{\frac{n}{N}}}^p+\bigO{\frac{1}{N^{\ell p}}},
  \end{align*}
as $N\to\infty$. For the sequence $u^{(N)}\coloneq N^{\ell-1/p}v^{(N)}$ it follows that
\[
\lsum{\ell}\abs{\grad^\ell u_n^{(N)}}^p=\frac{1}{N}\sum_{n=1}^N \abs{\varphi^{(\ell)}\paren{\frac{n}{N}}}^p+\bigO{\frac{1}{N}}, \quad\text{as } N\to\infty.
\]
Since also $u_{n}^{(N)}=0$ for $n<\ell$ and $n>N$, we may apply the discrete $p$-Birman inequality~\eqref{eq:p-birman_dis}  to obtain
\[
    \sum_{n=1}^N \frac{1}{N}\abs{\varphi^{(\ell)}\paren{\frac{n}{N}}}^p+\bigO{\frac{1}{N}}
        \ge B_{p}^{(\ell)}\,\lsum{\ell}\frac{\abs{u^{(N)}_n}^p}{n^{\ell p}}=B_{p}^{(\ell)}\sum_{n=1}^N \frac{1}{N}\frac{\abs{\varphi\paren{n/N}}^p}{\paren{n/N}^{\ell p}}
\]
for $N\to\infty$. Recognizing the Riemann sums and taking the limit $N\to\infty$ in the above inequality, we arrive at the desired integral inequality \eqref{eq:thm:p-birman_con}.
\end{proof}

\begin{rem}
We remark, without going into details, that one can proceed analogously as in the proof of inequality~\eqref{eq:thm:p-birman_con}, using, this time, the sampling sequence 
\[
u_n^{(N)}\coloneq N^{1-(\alpha+1)/p}\,\varphi\paren{\frac{n}{N}},
\]
to deduce from \eqref{eq:copson} and \eqref{eq:p-Hardy_weight} their continuous counterpart
\begin{equation}\label{eq:rem:weighted_hardy_con}
        \int_0^\infty x^\alpha\abs{\varphi'(x)}^p\dd x
            \ge C_{p}(\alpha)\int_0^\infty x^{\alpha-p}\abs{\varphi(x)}^p\dd x,
\end{equation}
which holds for all $\varphi\in C_0^\infty(\R_{+})$ and $\alpha<p-1$ (the continuous limit takes the same form regardless the sign of $\alpha$).
\end{rem}

\begin{rem}\label{rem:fritz}
 An integral form of the inequality \eqref{eq:rem:weighted_hardy_con} appeared already in the classical book by Hardy, Littlewood, and Pólya \cite[Theorem 330]{har-lit-pol_52}, see also \cite[Theorem~2]{kufner-maligranda-persson_07-hardy_ineq}. The continuous $p$-Birman inequality \eqref{eq:thm:p-birman_con} can be quickly proved through an iterative application of \eqref{eq:rem:weighted_hardy_con}. Specifically, starting with the $\ell$-th derivative of a test function $\varphi$ and selecting $\alpha = -(j-1)p$ in the $j$-th step for $j\in\{1,\dots,\ell\}$, the result follows directly.
\end{rem}

\section{Optimality} \label{sec:optimality}

\subsection{Proofs of optimality of constants}

We establish the optimality of the constants $B_{p}^{(\ell)}$ and $C_{p}(\alpha)$ in Theorems~\ref{thm:p-birman_dis} and~\ref{thm:weighted_hardy}. Due to the transition to the continuous case developed in Section~\ref{sec:con_birman}, it turns out to be sufficient to prove the sharpness of the constants in inequalities \eqref{eq:thm:p-birman_con} and  \eqref{eq:rem:weighted_hardy_con}, which will be done by constructing approximating sequences of test functions for which these inequalities hold as equalities in the limit. Although such a construction is also possible directly for the discrete inequalities \eqref{eq:p-birman_dis} and \eqref{eq:copson}, \eqref{eq:p-Hardy_weight}, the continuous approach is technically simpler: derivatives of monomial functions result in expressions simpler than their discrete counterparts. This elementary but useful idea has been emphasized in \cite{huang-ye_24-hardy}. 

\begin{proof}[Proof of optimality of the $p$-Birman constant $B_{p}^{(\ell)}$ in Theorems \ref{thm:p-birman_dis} and~\ref{thm:p-Birman_con}]
The proof in Section~\ref{sec:con_birman} shows that inequality \eqref{eq:thm:p-birman_con} follows from \eqref{eq:p-birman_dis} regardless of the explicit form of the constant $B_{p}^{(\ell)}$. Consequently, the optimality of the constant $B_{p}^{(\ell)}$ defined by~\eqref{eq:p-birman_constant} in the continuous $p$-Birman inequality \eqref{eq:thm:p-birman_con} immediately implies its optimality in the discrete counterpart \eqref{eq:p-birman_dis}.

For completeness, we verify the optimality of the constant $B_{p}^{(\ell)}$ in the continuous $p$-Birman inequality~\eqref{eq:thm:p-birman_con}, which amounts to proving
\begin{equation}
 \inf_{0\neq\varphi\in\C_{0}^{\infty}\!(\R_{+}\!)}\,\frac{\int_0^\infty |\varphi^{(\ell)}(x)|^{p}\dd x}{\int_0^\infty |\varphi(x)|^{p}\big/x^{\ell p}\dd x} = B_{p}^{(\ell)}
\label{eq:inf_optim}
\end{equation}
by construction of an approximating sequence of test functions $\varphi_N\in C_{0}^{\infty}(\R_{+})$, for which the ratio in the infimum converges to $B_{p}^{(\ell)}$ as $N\to\infty$. Concretely for $N>2$ and $x>0$, we set 
\begin{equation}\label{eq:def_varphi_N}
    \varphi_N(x) \coloneq x^{\ell-1/p}\,\xi_N(x),
\end{equation}
  with the cut-off function $\xi_N\in C_0^\infty(\R_+)$ satisfying
  \begin{equation} \label{eq:xi_propert}
    \xi_N(x)
    =
    \begin{cases}
      1 & \text{if } x\in[2/N,1], \\
      0 & \text{if } x\in\R_{+}\setminus[1/N,2],
    \end{cases}
    \quad\text{and }\quad
    \abs{\xi_N^{(j)}(x)}\le
    \begin{cases}
      CN^j & \text{if } x\in[1/N,2/N], \\
      C & \text{if } x\in[1,2],
    \end{cases}
  \end{equation}
  for all $j\in\{0,\dots,\ell\}$, where $C>0$ is a constant. Such a function can be constructed, for instance, as
  \begin{equation*}
    \xi_N(x) \coloneq \eta(Nx-1)\eta(2-x), 
  \end{equation*}
  where $\eta$ is a smooth function on $\R$ satisfying $\eta(x)=0$ for $x\leq0$ and $\eta(x)=1$ for $x\geq1$. This regularization has been utilized in a slightly different situation in \cite[Note 2.6]{owen_99-hardy-rellich}.

  Using the Leibnitz rule, we estimate the $L^{p}$-norm of the $\ell$-th derivative of $\varphi_N$ as
  \[
  \left(\int_{0}^{\infty}\left|\varphi_{N}^{(\ell)}(x)\right|^{p}\dd x\right)^{1/p} \le \sum_{j=0}^\ell\binom{\ell}{j}\left(1-\frac{1}{p}+j\right)_{\!\ell-j}
  \left(\int_{0}^{\infty}\left|x^{j-1/p}\xi_{N}^{(j)}(x)\right|^{p}\dd x\right)^{1/p}.
  \]
  Employing the properties~\eqref{eq:xi_propert}, we further estimate the integrals in the last sum separately for $j=0$ and $1j\in\{1,\dots,\ell\}$. For $j=0$, we have
  \[
  \int_{0}^{\infty}\left|x^{-1/p}\xi_{N}(x)\right|^{p}\dd x\leq C^{p}\int_{1/N}^{2/N}\frac{\dd x}{x} + \int_{2/N}^{1}\frac{\dd x}{x} + C^{p}\int_{1}^{2}\frac{\dd x}{x} = 2C^{p}\log 2 + \log(N/2),
  \]
  while for $j\in\{1,\dots,\ell\}$, we obtain 
  \[
   \int_{0}^{\infty}\left|x^{j-1/p}\xi_{N}^{(j)}(x)\right|^{p}\dd x\leq C^{p}N^{jp}\int_{1/N}^{2/N}x^{jp-1}\dd x + C^{p}\int_{1}^{2}x^{jp-1}\dd x =2C^{p}\,\frac{2^{jp}-1}{jp}.
  \]
  Recalling formula~\eqref{eq:p-birman_constant}, we obtain the upper bound
  \[
   \left(\int_{0}^{\infty}\left|\varphi_{N}^{(\ell)}(x)\right|^{p}\dd x\right)^{1/p}\leq B_{p}^{(\ell)}\left(A_0 + \log(N/2)\right)^{1/p}+A_1,
  \]
  where $A_0$ and $A_1$ are $N$-independent constants. Conversely, we find
  \[
  \int_{0}^{\infty}\frac{|\varphi_{N}(x)|^{p}}{x^{\ell p}}\dd x\geq \int_{2/N}^{1}\frac{\dd x}{x}=\log(N/2).
  \]
  Altogether, bearing~\eqref{eq:thm:p-birman_con} in mind, we have
  \[
   B_{p}^{(\ell)}\leq\frac{\int_0^\infty |\varphi_N^{(\ell)}(x)|^{p}\dd x}{\int_0^\infty |\varphi_N(x)|^{p}\big/x^{\ell p}\dd x}\leq
   \frac{\left(B_{p}^{(\ell)}\left(A_0+\ln(N/2)\right)^{1/p}+A_1\right)^{\! p}}{\ln(N/2)} \longrightarrow B_{p}^{(\ell)},\;\mbox{ as } N\to\infty,
  \]
  which yields~\eqref{eq:inf_optim}. The proof of Theorem~\ref{thm:p-birman_dis}, and also Theorem~\ref{thm:p-Birman_con}, is complete.
\end{proof}

\noindent
\emph{Proof of optimality of the Copson constant $C_{p}(\alpha)$ in Theorem~\ref{thm:weighted_hardy} and inequality~\eqref{eq:rem:weighted_hardy_con}}.
We omit the details here, as the proof of the optimality of the constant
$C_{p}(\alpha)$ in \eqref{eq:rem:weighted_hardy_con}, and consequently in
Theorem~\ref{thm:weighted_hardy} (as well as in~\eqref{eq:copson}), follows a similar
argument to the one detailed above. In this case, the test functions
\eqref{eq:def_varphi_N} are replaced by
\[
\varphi_{N}(x)\coloneq x^{1-(\alpha+1)/p}\,\xi_N(x).
\]
The sharpness within a space of non-smooth functions is also proved in \cite[Theorem 2(iii)]{kufner-maligranda-persson_07-hardy_ineq}.
\qed

\subsection{Remarks on optimality of weights}

Recent developments in the study of Hardy-type inequalities have shifted focus toward stronger notions of optimality that account for the entire weight function rather than merely the sharpness of the multiplicative constant. These notions are centered on the concept of \emph{criticality}. A weight sequence or function appearing on the right-hand side of inequalities such as \eqref{eq:p-Hardy_dis} or \eqref{eq:p-Hardy_cont} is said to be \emph{critical} if it admits no pointwise improvement on $\mathbb{N}$ (in the discrete case) or almost everywhere on $\mathbb{R}_{+}$ (in the continuous case). This subsection briefly summarizes the state of the art regarding critical weights in this context.

Despite the optimality of the $p$-Hardy constant $B_p^{(1)}$ in \eqref{eq:p-Hardy_dis} and \eqref{eq:p-Hardy_cont}, the $p$-Hardy weight function $B_p^{(1)}/x^{2p}$ in \eqref{eq:p-Hardy_cont} is critical, whereas its discrete counterpart $B_p^{(1)}/n^{2p}$ in \eqref{eq:p-Hardy_dis} is \textbf{not}. This distinction was first observed for $p=2$ in \cite{keller-pinchover-pogorzelski_18_hardy_on_graphs}, where a critical discrete Hardy weight was identified; see also \cite{kel-pin-pog_18, kre-sta_22}. Critical discrete $p$-Hardy weights for general $p>1$ were found in \cite{fisher-keller-pogorzelski_23-p-hardy} and further discussed in \cite{stampach-waclawek_25-herglotz}. Additionally, critical improvements of the Copson inequality \eqref{eq:copson} for specific powers are provided in \cite{das-man-pau_25}. Further investigations into the optimality of discrete Hardy inequalities across various settings include \cite{das-fue_26, fis_23, fis_24, kel-nit_23, kel-pin-pog_20}.

Close inspection of the estimates in the proof of the discrete $p$-Birman inequality in Section~\ref{sec:proofs} reveals that the discrete $p$-Birman weight in \eqref{eq:p-birman_dis} also admits an improvement. For $p=2$, the first improvements of the discrete Rellich weight ($\ell=2$) were reported in \cite{ger-kre-sta_25}. Critical discrete Birman weights for arbitrary $\ell \in \mathbb{N}$ and $p=2$ were obtained recently in \cite{stampach-waclawek_24-birman}. Finally, critical discrete $p$-Birman weights for the general case $p>1$, alongside a theoretical framework generalizing~\cite{stampach-waclawek_24-birman}, will be presented in the forthcoming work \cite{stampach-waclawek_26prep}.
 
\subsection*{Acknowledgments}
We are grateful to Fritz Gesztesy for his helpful comments on the origins of the continuous $p$-Birman inequality, which is mentioned in Remark~\ref{rem:fritz}.

\bibliographystyle{acm}

\begin{thebibliography}{10}

\bibitem{bennett_87}
{\sc Bennett, G.}
\newblock {Some elementary inequalities}.
\newblock {\em Quart. J. Math. Oxford Ser. (2) 38}, 152 (1987), 401--425.

\bibitem{birman_61}
{\sc Birman, M.~{\v S}.}
\newblock {On the spectrum of singular boundary-value problems}.
\newblock {\em Mat. Sb. (N.S.) 55 (97)} (1961), 125--174.

\bibitem{copson_28}
{\sc Copson, E.~T.}
\newblock {Note on {S}eries of {P}ositive {T}erms}.
\newblock {\em J. London Math. Soc. 3}, 1 (1928), 49--51.

\bibitem{das-manna_23-weight_hardy}
{\sc Das, B., and Manna, A.}
\newblock {On the improvements of Hardy and Copson inequalities}.
\newblock {\em Rev. Real Acad. Cienc. Exactas Fis. Nat. Ser. A-Mat. 117}, 2
  (2023).

\bibitem{DasManna2025_ImprovedCopson}
{\sc Das, B., and Manna, A.}
\newblock {An improved Copson inequality}.
\newblock {arXiv:2508.00388 [math.CA]} (2025).

\bibitem{das-man-pau_25}
{\sc Das, B., Manna, A., and Paul, T.}
\newblock {Improvement of the discrete weighted variant {H}ardy's inequality
  and criticality of the improved weight}.
\newblock {\em J. Geom. Anal. 35}, 12 (2025), 22 pp.

\bibitem{das-fue_26}
{\sc Das, U., and de~la Fuente-Fern\'andez, R.}
\newblock {An optimal fractional {H}ardy inequality on the discrete half-line}.
\newblock {\em Calc. Var. Partial Differential Equations 65}, 2 (2026).

\bibitem{DLMF}
{\it NIST Digital Library of Mathematical Functions}.
\newblock http://dlmf.nist.gov/, Release 1.1.5 of 2022-03-15.
\newblock F.~W.~J. Olver, A.~B. {Olde Daalhuis}, D.~W. Lozier, B.~I. Schneider,
  R.~F. Boisvert, C.~W. Clark, B.~R. Miller, B.~V. Saunders, H.~S. Cohl, and
  M.~A. McClain, eds.

\bibitem{fis_23}
{\sc Fischer, F.}
\newblock {A non-local quasi-linear ground state representation and criticality
  theory}.
\newblock {\em Calc. Var. Partial Differential Equations 62}, 5 (2023), 33 pp.

\bibitem{fis_24}
{\sc Fischer, F.}
\newblock {On the optimality and decay of {$p$}-{H}ardy weights on graphs}.
\newblock {\em Calc. Var. Partial Differential Equations 63}, 7 (2024), 37 pp.

\bibitem{fisher-keller-pogorzelski_23-p-hardy}
{\sc Fischer, F., Keller, M., and Pogorzelski, F.}
\newblock {An Improved Discrete p-Hardy Inequality}.
\newblock {\em Integral Equations and Operator Theory 95}, 24 (2023).

\bibitem{frank-seiringer_08-ineq}
{\sc Frank, R.~L., and Seiringer, R.}
\newblock {Non-linear ground state representations and sharp {H}ardy
  inequalities}.
\newblock {\em Journal of Functional Analysis 255}, 12 (2008), 3407--3430.

\bibitem{ger-kre-sta_25}
{\sc Gerhat, B., Krej\v{c}i\v{r}\'{i}k, D., and \v{S}tampach, F.}
\newblock {An improved discrete {R}ellich inequality on the half-line}.
\newblock {\em Israel J. Math. 268}, 1 (2025), 45--72.

\bibitem{ges-lit-mic-wel_18}
{\sc Gesztesy, F., Littlejohn, L.~L., Michael, I., and Wellman, R.}
\newblock {On {B}irman's sequence of {H}ardy-{R}ellich-type inequalities}.
\newblock {\em J. Differential Equations 264}, 4 (2018), 2761--2801.

\bibitem{glazman}
{\sc Glazman, I.~M.}
\newblock {\em {Direct methods of qualitative spectral analysis of singular
  differential operators}}.
\newblock Daniel Davey \& Co., Inc., New York, 1966.

\bibitem{har-lit-pol_52}
{\sc Hardy, G.~H., Littlewood, J.~E., and P\'olya, G.}
\newblock {\em {Inequalities}}.
\newblock Cambridge University Press, 1952.

\bibitem{huang-ye_24-hardy}
{\sc Huang, X., and Ye, D.}
\newblock {One-dimensional sharp discrete {H}ardy-{R}ellich inequalities}.
\newblock {\em J. Lond. Math. Soc. 109}, 1 (2024), 26 pp.

\bibitem{kel-nit_23}
{\sc Keller, M., and Nietschmann, M.}
\newblock {Optimal {H}ardy inequality for fractional {L}aplacians on the
  integers}.
\newblock {\em Ann. Henri Poincar\'e 24}, 8 (2023), 2729--2741.

\bibitem{kel-pin-pog_18}
{\sc Keller, M., Pinchover, Y., and Pogorzelski, F.}
\newblock {An improved discrete {H}ardy inequality}.
\newblock {\em Amer. Math. Monthly 125}, 4 (2018), 347--350.

\bibitem{keller-pinchover-pogorzelski_18_hardy_on_graphs}
{\sc Keller, M., Pinchover, Y., and Pogorzelski, F.}
\newblock {Optimal {H}ardy inequalities for {S}chr\"{o}dinger operators on
  graphs}.
\newblock {\em Comm. Math. Phys. 358}, 2 (2018), 767--790.

\bibitem{kel-pin-pog_20}
{\sc Keller, M., Pinchover, Y., and Pogorzelski, F.}
\newblock {Criticality theory for {S}chr\"odinger operators on graphs}.
\newblock {\em J. Spectr. Theory 10}, 1 (2020), 73--114.

\bibitem{kre-sta_22}
{\sc Krej\v{c}i\v{r}\'{i}k, D., and \v{S}tampach, F.}
\newblock {A sharp form of the discrete {H}ardy inequality and the
  {K}eller-{P}inchover-{P}ogorzelski inequality}.
\newblock {\em Amer. Math. Monthly 129}, 3 (2022), 281--283.

\bibitem{kufner-malingrada-oersson_06-prehistory}
{\sc Kufner, A., Maligranda, L., and Persson, L.-E.}
\newblock {The prehistory of the {H}ardy inequality}.
\newblock {\em Amer. Math. Monthly 113}, 8 (2006), 715--732.

\bibitem{kufner-maligranda-persson_07-hardy_ineq}
{\sc Kufner, A., Maligranda, L., and Persson, L.-E.}
\newblock {\em {The Hardy Inequality: About Its History and Some Related
  Results}}.
\newblock Vydavatelský Servis, Pilsen, Czech Republic, 2007.

\bibitem{owen_99-hardy-rellich}
{\sc Owen, M.~P.}
\newblock {The {H}ardy--{R}ellich inequality for polyharmonic operators}.
\newblock {\em Proceedings of the Royal Society of Edinburgh, Section A:
  Mathematics 129} (1999), 825--839.

\bibitem{rellich_56}
{\sc Rellich, F.}
\newblock {Halbbeschr\"{a}nkte {D}ifferentialoperatoren h\"{o}herer {O}rdnung}.
\newblock In {\em Proceedings of the {I}nternational {C}ongress of
  {M}athematicians, 1954, {A}msterdam, vol. {III}} (1956), North-Holland Publishing Co., Amsterdam, pp.~243--250.

\bibitem{stampach-waclawek_26prep}
{\sc {{\v{S}}tampach}, F., and {Waclawek}, J.}
\newblock {Optimal discrete $p$-Hardy-Rellich-Birman inequalities}.
\newblock {\em In preparation} (2026).

\bibitem{stampach-waclawek_25-herglotz}
{\sc \v{S}tampach, F.~s., and Waclawek, J.}
\newblock {A {H}erglotz--{N}evanlinna function from the optimal discrete
  {$p$}-{H}ardy weight}.
\newblock {\em Proc. Amer. Math. Soc. 154}, 2 (2026), 807--819.

\bibitem{stampach-waclawek_24-birman}
{\sc Štampach, F., and Waclawek, J.}
\newblock Optimal discrete {H}ardy--{R}ellich--{B}irman inequalities.
\newblock {\em Journal d'Analyse Mathématique} (2025).
\newblock Accepted for publication.

\end{thebibliography}

\end{document}